\tikzset{square matrix/.style={
    matrix of nodes,
    column sep=-\pgflinewidth, row sep=-\pgflinewidth,
    nodes={draw,
      minimum height=4.5pt,
      anchor=center,
      text width=4.5pt,
      align=center,
      inner sep=0pt
    },
  },
  square matrix/.default=1.2cm
}
\newtheorem{thm}{Theorem}
\newtheorem{prob}{Problem}
\newtheorem{ques}{Question}
\newtheorem{cor}{Corollary}
\newtheorem{prop}{Proposition}
\newtheorem{defn}{Definition}
\begin{document}

\title{Roman $k$-tuple domination in graphs}

\author{Adel P. Kazemi\\[1em]
Department of Mathematics\\ University of Mohaghegh Ardabili \\ P.O.\ Box 5619911367, Ardabil, Iran. \\
$^1$ Email: adelpkazemi@yahoo.com \\[1em]
}

\maketitle

\begin{abstract}
For any integer $k\geq 1$ and any graph $G=(V,E)$ with minimum degree at least $k-1$, we define a
function $f:V\rightarrow \{0,1,2\}$ as a Roman $k$-tuple dominating
function on $G$ if for any vertex $v$ with $f(v)=0$ there exist at least
$k$ and for any vertex $v$ with $f(v)\neq 0$ at least $k-1$ vertices in its neighborhood with $f(w)=2$. The minimum weight of a Roman $k$-tuple dominating function $f$ on $G$ is called the Roman $k$-tuple domination number of the graph where the weight of $f$ is $f(V)=\sum_{v\in V}f(v)$. 

In this paper, we initiate to study the Roman $k$-tuple
domination number of a graph, by giving some sharp bounds for the Roman $k$-tuple domination number of a garph, the Mycieleskian of a graph, and the corona graphs. Also finding the Roman $k$-tuple domination number of some known graphs is our other goal. Some of our results extend these one
given by Cockayne and et al. \cite{CDHH04} in 2004 for the Roman
domination number.
\\[0.2em]

\noindent
Keywords: Roman $k$-tuple domination number, Roman $k$-tuple graph,
$k$-tuple domination number, $k$-tuple total domination number, Mycieleskian of a graph.
\\[0.2em]

\noindent
MSC(2010): 05C69.
\end{abstract}

\pagestyle{myheadings}
\markboth{\centerline {\scriptsize  A. P. Kazemi}}   
{\centerline {\scriptsize A. P. Kazemi, Roman $k$-tuple domination in graphs}}

\section{ Introduction}

All graphs considered here are finite, undirected and simple. For
standard graph theory terminology not given here we refer to
\cite{West}. Let $G=(V,E) $ be a graph with the \emph{vertex set}
$V$ of \emph{order} $n(G)$ and the \emph{edge set} $E$ of
\emph{size} $m(G)$. The \emph{open neighborhood} of a vertex $v\in
V$ is $N_{G}(v)=\{u\in V\ |\ uv\in E\}$, while its cardinality is
the \emph{degree} of $v$. The \emph{closed neighborhood} of $v$ is
defined by $N_{G}[v]=N_{G}(v)\cup \{v\}$. Similarly, the \emph{open}
and \emph{closed neighborhoods} of a subset $X\subseteq V(G)$ are
$N_{G}(X)=\cup _{v\in X}N_{G}(v)$ and $N_{G}[X]=N_{G}(X)\cup X$,
respectively. The \emph{minimum} and \emph{maximum degree} of $G$
are denoted by $\delta =\delta (G)$ and $\Delta =\Delta (G)$,
respectively. If $\delta =\Delta=k$, then $G$ is called
$k$-\emph{regular}. We write $K_n$, $C_{n}$, $P_{n}$, and $W_n$ for
a \emph{complete graph}, a \emph{cycle}, a \emph{path}, and a
\emph{wheel} of order $n$, respectively, while $K_{n_1,...,n_p}$
denotes a \emph{complete $p$-partite graph}. Also $G[S]$ and
$\overline {G}$ denote the subgraph induced by a subset $S\subseteq
V$ and the \emph{complement} of $G$, respectively.

\vskip 0.15 true cm

For each integer $k\geq 1$, the $k$-\emph{join} $G\circ _{k}H$ of a graph $G$ to a graph $H$ of order at least $k$ is the graph obtained from the disjoint union of $G$ and $H$ by joining each vertex of $G$ to at least~$k$ vertices of $H$ \cite{HK}.

\vskip 0.15 true cm

Domination in graphs is now well studied in graph theory and the
literature on this subject has been surveyed and detailed in the two
books by Haynes, Hedetniemi, and Slater~\cite{HHS1, HHS2}. One type of domination is $k$-tuple domination number  that was introduced by Harary and Haynes \cite{HH}.

\begin{defn}
\emph{\cite{HH} For any positive integer $k$, a subset $S\subseteq V$ is a} $k$-tuple
dominating set \emph{of the graph $G$, if $| N_G[v]\cap S| \geq k$ for
every $v\in V$. The} $k$-tuple domination number $\gamma_
{\times k}(G)$ \emph{of $G$ is the minimum cardinality among the $k$-tuple
dominating sets of $G$.} 
\end{defn}

Henning and Kazemi in \cite{HK} introduced another type of domination called $k$-tuple total domination number of a graph which is an extension of the total domination number.

\begin{defn} 
\emph{\cite{HK} For any integer $k\geq 1$, a subset $S$ of $V$ is called a} $k$-tuple
total dominating set, \emph{abbreviated $k$TDS, of $G$ if for every vertex
$v\in V$, $|N(v)\cap S| \geq k$. The} $k$-tuple
total domination number $ \gamma _{\times k,t}(G)$ \emph{of $G$ is the
minimum cardinality of a $k$TDS of $G$. }
\end{defn}

Note that the 1-tuple domination number (1-tuple total domination number) is the classical domination number $\gamma(G)$ (total domination number $\gamma_t(G)$). A $k$-tuple dominating set ($k$-tuple total dominating set) of minimum cardinality of a graph $G$ is called a \emph{min}-$k$DS or  $\gamma_ {\times k}(G)$-\emph{set} (\emph{min}-$k$TDS or $\gamma_ {\times k,t}(G)$-\emph{set}).

\vskip 0.15 true cm

According to \cite{CDHH04}, Constantine the Great (Emperor of Rome)
issued a decree in the 4th century A.D. for the defense of his
cities. He decreed that any city without a legion stationed to
secure it must neighbor another city having two stationed legions.
If the first were attacked, then the second could deploy a legion to
protect it without becoming vulnerable itself. The objective, of
course, is to minimize the total number of legions needed. According
to it, Ian Steward by an article in Scientific American, entitled
"Defend the Roman Empire!" \cite{St99} suggested the Roman
dominating function.

\vskip 0.15 true cm

In \cite{KV}, K\"{a}mmerling and Volkmann extended the Roman
dominating function to the \emph{Roman $k$-dominating function} in
this way that for any vertex $v$ with $f(v)=0$ there are at least
$k$ vertices $w$ in its neighborhood with $f(w)=2$, and they defined the \emph{Roman $k$-domination number}
$\gamma_ {kR}(G)$ of a graph $G$ as the minimum weight of a Roman
$k$-dominating function $f$ on $G$ where the \emph{weight} of $f$ is $f(V)=\sum_{v\in V}f(v)$.

\vskip 0.15 true cm

This problem that for securing a city without a legion stationed or
a city with at least one legion stationed we need at least,
respectively, $k$ or $k-1$ cities having two stationed legions,
is our motivation to define the concept of Roman $k$-tuple domination number which is another extension of the Roman domination number.

\vskip 0.15 true cm

\begin{defn}
\emph{For any integer $k\geq 1$, a} Roman $k$-tuple dominating function, \emph{abbreviated R$k$DF, on
a graph $G$ with minimum degree at least $k-1$ is a function
$f\colon V\rightarrow \{0,1,2\}$ such that for any vertex $v$ with
$f(v)=0$ there exist at least $k$ and for any vertex $v$ with $f(v)\neq 0$ there exist
at least $k-1$ vertices $w$ in its neighborhood with $f(w)=2$.  The} Roman $k$-tuple domination
number $\gamma_ {\times kR}(G)$ \emph{of a graph $G$ is the minimum
weight of a Roman $k$-tuple dominating function $f$ on $G$ where the}
weight \emph{of $f$ is $f(V)=\sum_{v\in V}f(v)$.}
\end{defn}

The Roman 1-tuple domination number is the usual
\emph{Roman domination number} $\gamma_ {R}(G)$. 

\vskip 0.15 true cm

A \emph{min}-R$k$DF is a Roman $k$-tuple dominating function
with the minimum weight. For a Roman $k$-tuple dominating function
$f$ let $(V_0, V_1, V_2)$ be the ordered partition of $V$ induced
by $f$ where $V_i =\{v\in V \mid f(v)=i\}$ for $i = 0, 1, 2$. Since
there is a one-to-one correspondence between the function $f$ and
the ordered partitions $(V_0,V_1,V_2)$ of $V$, we will write
$f=(V_0,V_1,V_2)$. Figure \ref{fi:C_10)} shows a min-R2DF of cycle $C_{10}$.

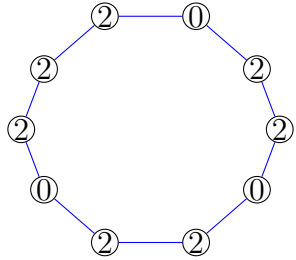
\begin{figure}[htp]
\centering
\begin{tikzpicture}
\tikzstyle{vertex}=[draw,circle,minimum size=10pt,inner sep=0pt]

\node[vertex] at (0,3) (r1) {$2$};
\node[vertex] at (0,0) (r2) {$2$};
\node[vertex] at (-0.8,2.3) (r3) {$2$};
\node[vertex] at (-0.8,0.7) (r4) {$0$};
\node[vertex] at (-1.1,1.5) (r5) {$2$};

\node[vertex] at (1.2,3) (r6) {$0$};
\node[vertex] at (1.2,0) (r7) {$2$};
\node[vertex] at (2,2.3) (r8) {$2$};
\node[vertex] at (2,0.7) (r9) {$0$};
\node[vertex] at (2.3,1.5) (r10) {$2$};

{
\color{blue}
\draw (r1) -- (r6); \draw (r6) -- (r8); \draw (r8) -- (r10); \draw (r10) -- (r9);
\draw (r9) -- (r7); \draw (r7) -- (r2); \draw (r2) -- (r4);
\draw (r4) -- (r5); \draw (r5) -- (r3); \draw (r3) -- (r1);
}
\end{tikzpicture}
\caption {$\gamma_{\times 2R}(C_{10})=14$} \label{fi:C_10)}
\end{figure}

\vskip 0.15 true cm

In this paper, we initiate to study the Roman $k$-tuple
domination number of a graph, by giving some sharp bounds for the Roman $k$-tuple domination number of a garph, the Mycieleskian of a graph, and the corona graphs. Also finding the Roman $k$-tuple domination number of some known graphs is our other goal. Some of our results extend these one
given by Cockayne and et al. \cite{CDHH04} in 2004 for the Roman
domination number.

\section {General results}

In this section, we state some properties of the Roman $k$-tuple dominating functions, and some sharp bounds for the Roman $k$-tuple domination number of a graph. 

\begin{prop}
\label{properties} For any min-R$k$DF $f=(V_0,V_1,V_2)$ on a graph $G$ with $\delta(G) \geq k-1$, the following statements hold.

\emph{(a)} $\gamma _{\times kR}(G)\geq \gamma _{kR}(G)$.

\emph{(b)} $V_1\cup V_2$ is a $k$-tuple dominating set of $G$.

\emph{(c)} $V_2$ is a $k$-tuple dominating set of $G[V_0\cup V_2]$.

\emph{(d)} For $k\geq 2$, $V_2$ is a $(k-1)$-tuple total dominating
set of $G$.

\emph{(e)} Every vertex of degree $k-1$ belongs to $V_1\cup V_2$.

\emph{(f)} $G[V_1]$ has maximum degree $1$.

\emph{(g)} Every vertex in $V_1$ is adjacent to precisely $k-1$ vertices in $V_2$.

\emph{(h)} Each vertex in $V_0$ is adjacent to at most two vertices in $V_1$.
\end{prop}

\begin{proof}
We omit the proofs of (a)-(e); they are clear. Let
$f=(V_0,V_1,V_2)$ be any $\gamma_{\times kR}$-function of $G$.

\vskip 0.2 true cm

(f) For any $x\in V_1$, let $x_1$, $x_2$, $\cdots$, $x_d$ be all
neighbors of $x$ in $V_1$. Since
\[
f'=(V_0\cup\{x_1,x_2,\cdots,x_d\},V_1-\{x,x_1,\cdots,x_d\},V_2\cup\{x\}),
\]
with the value $f'(V)=f(V)-d+1$, is a R$k$DF on $G$ if and only if
$d\leq 1$, we conclude that $G[V_1]$ has maximum degree 1.

\vskip 0.2 true cm

(g) For any $x\in V_1$, let $x_1$, $x_2$, $\cdots$, $x_d$ be all
neighbors of $x$ in $V_2$. Then $d\geq k-1$. If $d\geq k$ for some $x\in V_1$, then
$f'=(V_0\cup\{x\},V_1-\{x\},V_2)$ is a R$k$DF on $G$ with the value
$f'(V)=f(V)-1$, a contradiction. Therefore $d=k-1$.

\vskip 0.2 true cm

(h) For some $x\in V_0$, let $x_1$, $x_2$, $\cdots$, $x_d$ be all
neighbors of $x$ in $V_1$, for some $d\geq 3$. Then
$f'=(V_0\cup\{x_1,x_2,\cdots,x_d\},V_1-\{x_1,x_2,\cdots,x_d\},V_2\cup\{x\})$
is a R$k$DF on $G$ with the value $f'(V)=f(V)-d+2<f(V)$, a
contradiction. Therefore $d\leq 2$.
\end{proof}

As a consequence of Proposition \ref{properties} (c),(d), we have the following result.

\begin{cor}
If $G$ is a Roman $k$-tuple graph, that is $\gamma _{\times kR}(G)= 2\gamma
_{\times k}(G)$, then
\[
2\max\{\gamma_{\times (k-1),t}(G),\gamma_{\times k}(G)\} \leq
\gamma_{\times kR}(G).
\]
\end{cor}
For any graph $G=(V,E)$ of order $n$ and with minimum degree at least $k-1\geq 1$, since $(\emptyset,\emptyset,V)$ is a R$k$DF on $G$,
we have $\gamma _{\times kR}(G)\leq 2n$. On the other hand, since
for any R$k$DF $f=(V_0,V_1,V_2)$, $|V_2|\geq k$, we have $\gamma
_{\times kR}(G)\geq 2k$. Also, it can easily be verified that $\gamma
_{\times kR}(G)=2k$ if and only if $G=K_k$ or $G=H\circ_k K_k$ for
some graph $H$. Therefore we have proved the next theorem.

\begin{thm}
\label{2k=<gamaxkR=<2n} For any graph $G$ of order $n$ and with minimum degree at least $k-1\geq 1$,
\[
2k\leq \gamma _{\times kR}(G)\leq 2n,
\]
and $\gamma _{\times kR}(G)=2k$ if and only if $G=K_k$ or
$G=H\circ_k K_k$ for some graph $H$.
\end{thm}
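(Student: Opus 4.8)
The plan is to establish the two inequalities first — both are short — and then prove the equality characterization in each direction. For the upper bound $\gamma_{\times kR}(G)\le 2n$ I would take the constant function $f\equiv 2$: every vertex $v$ has $f(v)=2\ne 0$ and, since $\deg(v)\ge\delta(G)\ge k-1$, at least $k-1$ of its neighbours carry value $2$, so $f$ is an RkTDF of weight $2n$. For the lower bound $\gamma_{\times kR}(G)\ge 2k$, take a $\gamma_{\times kR}(G)$-function $f=(V_0,V_1,V_2)$. The key observation is that \emph{every} vertex of $G$ has at least $k-1$ neighbours in $V_2$: a vertex in $V_0$ needs $k$ such neighbours, a vertex in $V_1\cup V_2$ needs $k-1$. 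Because $k-1\ge 1$, i.e.\ $k\ge 2$, this forces $V_2\ne\emptyset$; picking $v\in V_2$, its at least $k-1$ neighbours lie in $V_2\setminus\{v\}$, so $|V_2|\ge k$ and hence $\gamma_{\times kR}(G)=|V_1|+2|V_2|\ge 2|V_2|\ge 2k$.

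For the ``if'' direction, suppose $G=K_k$ or $G=H\circ_k K_k$. Let $A$ be the vertex set of the $K_k$ (the copy of $K_k$ inside the $k$-join, in the second case), and set $f(a)=2$ for $a\in A$ and $f(v)=0$ otherwise. Here one exploits that the right factor $K_k$ has order \emph{exactly} $k$, so in the $k$-join every vertex of $H$ is adjacent to all $k$ vertices of $A$ (being adjacent to ``at least $k$'' of them), while each vertex of $A$ has its $k-1$ neighbours inside $A$. Thus $f$ is an RkTDF of weight $2k$, and combined with the lower bound just proved this yields $\gamma_{\times kR}(G)=2k$. One also checks $\delta(G)\ge k-1$ in both cases, so these graphs genuinely fall under the hypothesis of the theorem.

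For the ``only if'' direction, let $f=(V_0,V_1,V_2)$ be a $\gamma_{\times kR}(G)$-function of weight $2k$. The lower-bound argument gives $|V_2|\ge k$, and $|V_1|+2|V_2|=2k$ then forces $|V_2|=k$ and $V_1=\emptyset$, so $V=V_0\cup V_2$. Since each vertex of $V_2$ has at least $k-1$ neighbours in the $(k-1)$-element set $V_2\setminus\{v\}$, we get $G[V_2]=K_k$; since each vertex of $V_0$ has at least $k$ neighbours in the $k$-element set $V_2$, it is adjacent to all of $V_2$. If $V_0=\emptyset$ then $G=G[V_2]=K_k$; otherwise $G$ is the disjoint union of $G[V_0]$ and $K_k=G[V_2]$ with every vertex of $G[V_0]$ joined to all of $V(G[V_2])$, that is, $G=G[V_0]\circ_k K_k$.

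The one point to be careful about is the bookkeeping with the $k$-join in the equality case: because $K_k$ has order exactly $k$, the clause ``joined to at least $k$ vertices'' in the definition of $\circ_k$ collapses to ``joined to all of them'', so the join structure read off from $V_0$ and $V_2$ matches $G[V_0]\circ_k K_k$ exactly and no further edges between $V_0$ and $V_2$ are even possible. The other thing worth flagging is that the hypothesis $k-1\ge 1$ is used essentially — it is precisely what excludes the all-ones function and guarantees $V_2\ne\emptyset$ in the lower-bound step. Everything else is routine verification.
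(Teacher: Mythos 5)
Your proof is correct and follows essentially the same route as the paper: the constant function $f\equiv 2$ for the upper bound, the observation $|V_2|\ge k$ for the lower bound, and the extraction of $G=K_k$ or $G=H\circ_k K_k$ from a weight-$2k$ function. The only difference is that the paper dismisses the equality characterization with ``it can easily be verified,'' whereas you actually carry out that verification (including the point that $K_k$ having order exactly $k$ makes the $k$-join condition collapse to ``joined to all of $V_2$''), which is a welcome filling-in of detail rather than a different argument.
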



Theorem \ref{2k=<gamaxkR=<2n} characterizes graphs $G$ with $\gamma
_{\times kR}(G)=2k$. Next proposition characterizes graphs $G$ with
$\gamma _{\times kR}(G)=2k+1$. First we construct a graph.

\vskip 0.15 true cm

Let $n\geq k+1\geq 3$. For $n=k+1$ let $\mathcal{A}_k$ be the complete graph $K_{k+1}$ minus an edge, and for $n>k+1$ let $\mathcal{A}_k$ be the graph with the vertex set
$V=\{v_i\mbox{ }|\mbox{ }1\leq i\leq n\}$ such that the induced
subgraph of $\mathcal{A}_k$ by $\{v_i \mbox{ }|\mbox{ } 1\leq i\leq
k+1\}$ is the complete graph $K_{k+1}$ minus edge $v_kv_{k+1}$, and
for any $i\geq k+2$, $\{v_j~|~1\leq j\leq k\}\subseteq
N_{\mathcal{A}_k}(v_i)$.

\begin{prop}
\label{gama xkR=2k+1} For any graph $G$ with $\delta(G)\geq k-1\geq 1$, $\gamma _{\times kR}(G)=2k+1$ if and only if $G\cong \mathcal{A}_k$, that is, $G$ is isomorphic to $\mathcal{A}_k$.
\end{prop}

\begin{proof}
Let $G$ be a graph with $\delta(G)\geq k-1\geq 1$. If $G\cong \mathcal{A}_k$, then $(V_0,V_1,V_2)$ is a min-R$k$DF on $G$ where $V_2=\{v_i\mbox{ }|\mbox{ }1\leq i\leq k\}$, $V_1=\{v_{k+1}\}$ and $V_0=V(\mathcal{A}_k)-V_1\cup V_2$, and so $\gamma _{\times kR}(G)=2k+1$.

\vskip 0.15 true cm

Conversely, let $\gamma _{\times kR}(G)=2k+1$, and let
$f=(V_0,V_1,V_2)$ be a min-R$k$DF on
$G$. Hence $|V_2|=k$ and $|V_1|=1$. If $V_2=\{v_i \mbox{ }|\mbox{ }
1\leq i\leq k\}$ and $V_1=\{v_{k+1}\}$, then the assumption $\gamma _{\times kR}(G)=2k+1$ implies that there exists a vertex in $V_2$, say $v_k$, which is not adjacent to $v_{k+1}$, that is $G\cong \mathcal{A}_k$.
\end{proof}

Note that if $k\geq 2$ and $G$ is ($k-1$)-regular, then $\gamma
_{\times kR}(G)=2n$. We will show that its converse holds only for
$k=2$. For $k\geq 3$, for example, if $G$ is a graph which is
obtained by the complete bipartite graph $K_{k,k}$ minus a
matching of cardinality $k-1$, then $\gamma _{\times
kR}(G)=4k$ while $G$ is not ($k-1$)-regular.

\begin{prop}
\label{gamax2R=2n} For any graph $G$ of order $n$ and without isolate
vertex, $ \gamma _{\times 2R}(G)=2n$ if and only if $G=\ell
K_2$ for some $\ell\geq 1$.
\end{prop}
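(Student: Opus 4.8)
The plan is to prove the two implications separately.

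For the ``if'' direction, suppose $G=\ell K_2$. Every vertex has degree $1$, so in any Roman $2$-tuple dominating function $f$ no vertex can receive the value $0$ (it would need two neighbours of value $2$ but has only one neighbour), and a vertex of value $1$ or $2$ needs one neighbour of value $2$, which forces its unique neighbour to have value $2$. Applying this to both ends of each $K_2$-component shows that $f$ is identically $2$; since such functions exist here ($\delta(G)=1=k-1$), this gives $\gamma_{\times 2R}(G)=2n$.

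For the ``only if'' direction I would argue the contrapositive: assuming $G$ has no isolated vertex and $G\neq \ell K_2$ for every $\ell\geq 1$, I would exhibit a Roman $2$-tuple dominating function of weight at most $2n-1$. The value $\gamma_{\times 2R}$ of a disjoint union is the sum of the values on the components (a union of Roman $2$-tuple dominating functions of the components is one for $G$, each component being connected of order $\geq 2$ and hence of minimum degree $\geq 1=k-1$), and $\gamma_{\times 2R}(G_i)\leq 2n(G_i)$ by Theorem~\ref{2k=<gamaxkR=<2n}; so it is enough to take a component $H$ of order $n'\geq 3$ --- one exists, since otherwise every component, having order $\geq 2$, would be a $K_2$ and $G$ would be a union of copies of $K_2$ --- and to build a Roman $2$-tuple dominating function of $H$ of weight $2n'-1$. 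For this I would use the function that gives the value $1$ to one suitably chosen vertex $v$ and the value $2$ to every other vertex, whose weight is exactly $2n'-1$.

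The heart of the matter is choosing $v$ so that the resulting function is still a Roman $2$-tuple dominating function, that is, so that each vertex of value $2$ retains a neighbour of value $2$. The only vertices put at risk are the neighbours of $v$, and such a neighbour $x$ fails exactly when $x$ is a leaf whose only neighbour is $v$. So I would take $v$ to be a leaf of $H$ whenever $H$ has one: then the vertex adjacent to $v$ is its support vertex, which has degree $\geq 2$ because $H$ is connected of order $\geq 3$, so it still has a value-$2$ neighbour different from $v$. When $\delta(H)\geq 2$ I would take $v$ arbitrary, since then $v$ has no leaf neighbour and every neighbour of $v$ has a further neighbour. A routine verification of the three kinds of vertices --- $v$ itself, the neighbours of $v$, and the rest --- then shows the function is valid, and the reduction to components yields $\gamma_{\times 2R}(G)\leq 2n-1<2n$. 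I expect this leaf case distinction to be the one point needing care; in particular a star $K_{1,m}$ with $m\geq 2$ is harmless precisely because it has leaves.
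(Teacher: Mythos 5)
Your argument is correct, and it takes a genuinely different route from the paper's. The paper works by contradiction: from $\gamma_{\times 2R}(G)=2n$ it deduces that the all-$2$ function is the only R2TDF, claims this forces $\delta(G)\geq 2$, and then contradicts minimality with the weight-$(2n-2)$ function $(\{w\},\emptyset,V-\{w\})$ that assigns $0$ to one vertex. You prove the contrapositive instead: you isolate a connected component $H$ of order at least $3$ and lower a single well-chosen vertex to the value $1$ (a leaf of $H$ if one exists, any vertex when $\delta(H)\geq 2$), getting an R2TDF of $H$ of weight $2n(H)-1$, and then combine with R2TDFs of the other components. What your construction buys is exactly the case the paper's construction cannot reach: graphs with degree-one vertices that are not $\ell K_2$ (e.g.\ $P_3$ or the stars $K_{1,m}$), where no vertex can be given the value $0$ but one can be given the value $1$. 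Indeed, the paper's intermediate step ``$f(v_i)=2$ implies $\deg_G(v_i)\geq 2$'' does not follow from the definition (a vertex of value $2$ only needs one value-$2$ neighbour, which every non-isolated vertex automatically has when $f\equiv 2$), so its contradiction really only rules out $\delta(G)\geq 2$; your leaf/no-leaf case split, together with the observation that the support vertex of a leaf in a connected component of order at least $3$ has degree at least $2$, closes precisely that gap. You also verify the ``if'' direction explicitly, which the paper leaves to the earlier remark that $(k-1)$-regular graphs have $\gamma_{\times kR}=2n$; the cost is only that your proof is somewhat longer than the paper's sketch.
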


\begin{proof}
Let $G=(V,E)$ be a graph of order $n$ and without isolate vertex, and let
$\gamma _{\times 2R}(G)=2n$. Since $deg(w)\geq 2$, for some vertex $w$, implies that the function
$(\{w\},\emptyset,V-\{w\})$ is a R2DF on $G$ with weight less than $2n$, we conclude $G=\ell K_2$ for some $\ell\geq 1$. Since the proof of inverse case is trivial, we have completed our proof.
\end{proof}

Cockayne and et al. in \cite{CDHH04} proved that for any graph $G$,
\begin{equation}%
\gamma(G)\leq \gamma _{R}(G)\leq 2\gamma(G).
\label{gama=<gama{R}=<2gamma}
\end{equation}
As an extension of inequality (\ref{gama=<gama{R}=<2gamma}), next theorem improves the lower bound $2k$ given in Theorem \ref{2k=<gamaxkR=<2n} for $k\geq 2$.

\begin{thm}
\label{xk+k, xkR, 2xk} For any graph $G$ with $\delta (G)\geq
k-1\geq 1$,
\[
\gamma _{\times k}(G)+k\leq \gamma _{\times kR}(G)\leq 2\gamma
_{\times k}(G),
\]
and the lower bound is sharp.
\end{thm}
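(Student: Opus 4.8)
The plan is to prove the two inequalities separately and then exhibit a family attaining the lower bound.

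For the upper bound, I would start from a $\gamma_{\times k}(G)$-set $S$ and define $f$ by $f(v)=2$ for $v\in S$ and $f(v)=0$ for $v\in V\setminus S$; equivalently $f=(V\setminus S,\emptyset,S)$. The claim is that $f$ is a RkTDF. If $f(v)=0$ then $v\notin S$, so $N_G[v]\cap S=N_G(v)\cap S$, and since $S$ is a $k$-tuple dominating set this set has size at least $k$; thus $v$ has at least $k$ neighbours of value $2$. If $f(v)=2$ then $v\in S$, so $v\in N_G[v]\cap S$ and hence $|N_G(v)\cap S|=|N_G[v]\cap S|-1\geq k-1$; thus $v$ has at least $k-1$ neighbours of value $2$. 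Consequently $\gamma_{\times kR}(G)\leq f(V)=2|S|=2\gamma_{\times k}(G)$.

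For the lower bound, I would take a $\gamma_{\times kR}(G)$-function $f=(V_0,V_1,V_2)$ and show that $S:=V_1\cup V_2$ is a $k$-tuple dominating set of $G$. For $v\in V_0$ we have $|N_G(v)\cap V_2|\geq k$, hence $|N_G[v]\cap S|\geq k$. For $v\in S$ we have $f(v)\neq 0$, so $|N_G(v)\cap V_2|\geq k-1$, and since $v\in N_G[v]\cap S$ while $v\notin N_G(v)$ this gives $|N_G[v]\cap S|\geq (k-1)+1=k$. Therefore $\gamma_{\times k}(G)\leq |S|=|V_1|+|V_2|$. Now I would invoke the fact, established in the proof of Theorem \ref{2k=<gamaxkR=<2n}, that $|V_2|\geq k$ for every RkTDF (here the hypothesis $k\geq 2$ is what makes this true). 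Adding $|V_2|\geq k$ to the previous inequality yields $\gamma_{\times k}(G)+k\leq |V_1|+2|V_2|=f(V)=\gamma_{\times kR}(G)$, as required.

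For sharpness, consider $G=K_n$ with $n\geq k$ (in particular $G=K_k$). Since every closed neighbourhood equals $V$, a set $S$ is $k$-tuple dominating if and only if $|S|\geq k$, so $\gamma_{\times k}(K_n)=k$; on the other hand Theorem \ref{2k=<gamaxkR=<2n} gives $\gamma_{\times kR}(K_n)\geq 2k$, and assigning $2$ to $k$ vertices and $0$ to the remaining ones is a RkTDF of weight $2k$, so $\gamma_{\times kR}(K_n)=2k=\gamma_{\times k}(K_n)+k$. The routine part of the argument is the case-checking that the function and the set constructed above have the required domination properties; the only place where the bound is not purely formal is the passage from $\gamma_{\times k}(G)\leq|V_1|+|V_2|$ to the stated lower bound, which is exactly why one needs the structural input $|V_2|\geq k$ rather than merely $|V_2|\geq k-1$. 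I expect that step (and the mild hypothesis $k\geq 2$ behind it) to be the one subtle point.
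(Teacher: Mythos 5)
Your argument is correct and follows essentially the same route as the paper's: the upper bound by turning a $\gamma_{\times k}(G)$-set $S$ into the RkTDF $(V\setminus S,\emptyset,S)$, and the lower bound by observing that $V_1\cup V_2$ is a $k$-tuple dominating set and adding the fact that $|V_2|\geq k$ for every RkTDF (which, as you rightly flag, is where $k\geq 2$ enters). The only real difference is your sharpness example $K_n$, $n\geq k$, which attains $\gamma_{\times kR}=\gamma_{\times k}+k$ directly and is arguably cleaner than the paper's ad hoc construction, whose displayed conclusion $\gamma_{\times kR}(G)=\gamma_{\times k}(G)+2k$ does not even exhibit equality in the stated lower bound.
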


\begin{proof}
Since for any min-$k$DS $S$ of $G=(V,E)$, the function $f =(V
-S,\emptyset,S)$ is a R$k$DF on $G$, we have $\gamma _{\times
kR}(G)\leq 2\mid S\mid = 2\gamma _{\times k}(G)$. On the other hand, since for any min-R$k$DF $f=(V_0,V_1,V_2)$ on $G$, $V_1\cup V_2$ is a $k$-tuple dominating set of $G$, we have
\[
\gamma _{\times kR}(G)=2|V_2|+|V_1| \geq \gamma _{\times k}(G)+|V_2|\geq
\gamma _{\times k}(G)+k.
\]

\vskip 0.15 true cm

For any graph $H$ of order $k$, the lower bound is sharp for $G=\overline{K_k}\circ_{*(k-1)}K_k$. Because the function $(\emptyset, V(\overline{K_k}),V(K_k))$ is a min-R$k$DF on $G$ and $V(K_k)$ is a min-$k$DS of $G$.
\end{proof}


Following E. J. Cockayne, P. A. Dreyer Jr., S. M. Hedetniemi and S. T.
Hedetniemi \cite{CDHH04}, we will say that a graph $G$ is a
\emph{Roman $k$-tuple graph} if $\gamma _{\times kR}(G)= 2\gamma
_{\times k}(G)$. Next proposition characterizes the Roman $k$-tuple graphs.
\begin{prop}
\label{k-TupRomGraph} A graph $G$ with $\delta (G)\geq k-1$ is a
Roman $k$-tuple graph if and only if it has a min-R$k$DF $f=(V_0,\emptyset,V_2)$, that is, $V_2$ is a min-$k$DS of $G$.
\end{prop}

\begin{proof}
Let $G$ be a Roman $k$-tuple graph, and let $S$ be a min-$k$DS of $G$. Since $f=(V-S,\emptyset,S)$ is a R$k$DF on $G$ with weight $f(V)=2\mid S\mid= 2\gamma _{\times k}(G)= \gamma _{\times kR}(G)$, we conclude that $f$ is a min-R$k$DF.

\vskip 0.15 true cm

Conversely, if $f=(V_0,\emptyset,V_2)$ is a min-R$k$DF on $G$, then $\gamma _{\times kR}(G)=2\mid V_2\mid$, and $V_2$ is a $k$DS of $G$. Hence $\gamma _{\times k}(G)\leq |V_2| = \gamma _{\times kR}(G)/2$. Applying Theorem \ref{xk+k, xkR, 2xk} implies $\gamma _{\times kR}(G)=2\gamma _{\times k}(G)$, that is, $G$ is a Roman $k$-tuple graph.
\end{proof}

\begin{cor}
\label{1-TupRomGraph} \emph{\cite{CDHH04}} A graph $G$ is a Roman graph if
and only if it has a min-RDF $f=(V_0,\emptyset,V_2)$.
\end{cor}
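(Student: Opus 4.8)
The plan is to obtain this corollary as the special case $k=1$ of Proposition~\ref{k-TupRomGraph}, which has already been established. The first thing I would check is that the hypothesis of that proposition is vacuous when $k=1$: it requires $\delta(G)\geq k-1$, and for $k=1$ this reads $\delta(G)\geq 0$, which holds for every graph $G$. So no restriction on $G$ is lost.

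Next I would align the terminology. By definition in the introduction, $\gamma_{\times 1R}(G)=\gamma_R(G)$ and $\gamma_{\times 1}(G)=\gamma(G)$, and a $\gamma_{\times 1R}(G)$-function is exactly a $\gamma_R(G)$-function; moreover a $1$-tuple Roman graph is, by the convention stated just before Proposition~\ref{k-TupRomGraph}, precisely a Roman graph. With these identifications, Proposition~\ref{k-TupRomGraph} specialized to $k=1$ says: a graph $G$ is a Roman graph if and only if it has a $\gamma_R$-function $f=(V_0,V_1,V_2)$ with $V_1=\emptyset$, which is exactly the statement of the corollary.

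There is essentially no obstacle here; the only point that deserves a word is the one above, namely that the condition $\delta(G)\geq k-1$ imposes nothing when $k=1$, so the corollary indeed recovers the Cockayne--Dreyer--Hedetniemi--Hedetniemi result \cite{CDHH04} for all graphs without any extra assumption. I would therefore keep the proof to a single sentence invoking Proposition~\ref{k-TupRomGraph} with $k=1$.
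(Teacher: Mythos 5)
Your proposal is correct and matches the paper's intent exactly: the corollary is stated immediately after Proposition~\ref{k-TupRomGraph} precisely as its $k=1$ specialization, with the observation that $\delta(G)\geq 0$ is vacuous being the only point worth noting. Nothing further is needed.
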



\section {Complete bipartite graphs, paths, cycles and wheels}

Here, we calculate the Roman $k$-tuple domination number of
a complete bipartite graph, a cycle, a path, and a wheel.


\begin{prop}
\label{R$k$DF,Comp.Bipartite} For any integer $n\geq m\geq k-1\geq 1$,
\begin{equation*}
\gamma_{\times kR}(K_{n,m})=\left\{
\begin{array}{ll}
3k-3+n & \mbox{if }n\geq m=k-1, \\
4k-2 & \mbox{if }n\geq m=k, \\
4k-1 & \mbox{if } n=m=k+1, \\
4k & \mbox{if } n> m\geq k+1.
\end{array}
\right.
\end{equation*}
\end{prop}

\begin{proof}
Assume that $V(K_{n,m})$ is partitioned to the independent sets $X$
and $Y$ such that $|X|=n$ and $|Y|=m$. Since the Roman $k$-tuple
dominating functions given in each of the following cases have
minimum weight, our proof is completed.

\vskip 0.15 true cm

\textsc{Case 1.} $n\geq m=k-1$. Consider $f=(\emptyset,\emptyset,X\cup Y)$ when $n=m$, and consider $f=(\emptyset,V_1,V_2)$ when $n>m$ in which
$Y\subseteq V_2$, $|V_2\cap X|=k-1$ and $V_1=X-V_2$.
\vskip 0.15 true cm

\textsc{Case 2.} $n\geq m=k$. Consider $f=(V_0,\emptyset,V_2)$ where $|V_2\cap Y|=k$, $|V_2\cap X|=k-1$ and
$V_0=X\cup Y-V_2$.

\vskip 0.15 true cm

\textsc{Case 3.} $n=m=k+1$. Consider $f=(V_0,V_1,V_2)$ where $|V_2\cap Y|=k$, $|V_2\cap X|=k-1$, $V_1=Y-V_2$ and
$V_0=X\cup Y-V_1\cup V_2$.

\vskip 0.15 true cm

\textsc{Case 4.} $n>m\geq k$. Consider
$f=(V_0,\emptyset,V_2)$ where $|V_2\cap X|=|V_2\cap Y|=k$ and
$V_0=X\cup Y-V_2$.
\end{proof}

\begin{cor}
\label{gamm xkR(Kn,m)=k gamm R(Kn,m) if...} If $n> m\geq k+1\geq 3$, then $\gamma_{\times kR}(K_{n,m})=k\gamma_R(K_{n,m})$.
\end{cor}

\begin{proof}
It is sufficient to consider 
\begin{equation*}
\gamma_{\times R}(K_{n,m})=\left\{
\begin{array}{ll}
2 & \mbox{if }n\geq m=1, \\
3 & \mbox{if }n\geq m=2, \\
4 & \mbox{if } n\geq m\geq 3.
\end{array}
\right.
\end{equation*}
\end{proof}

In the next step, we will calculate $\gamma_{\times 2R}(C_n)$ (notice $\gamma_{\times 3R}(C_n)=2n$ by Proposition \ref{properties}).

\begin{prop}
\label{gamm x2R(Cn)} For any cycle $C_n$ of order $n\geq 3$,
$\gamma_{\times 2R}(C_n)=2\lceil\frac{2n}{3}\rceil$.
\end{prop}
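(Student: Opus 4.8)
The plan is to prove matching upper and lower bounds on $\gamma_{\times 2R}(C_n)$. Label the vertices of $C_n$ cyclically as $v_0, v_1, \dots, v_{n-1}$, with indices taken modulo $n$. Since $C_n$ is $2$-regular and $\delta(C_n) = 2 \geq k-1 = 1$, an R2TDF is well-defined, and the condition says: every vertex with $f(v)=0$ must have \emph{both} its neighbors in $V_2$, while every vertex with $f(v)\neq 0$ must have \emph{at least one} neighbor in $V_2$.

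For the upper bound, I would exhibit an explicit R2TDF of weight $2\lceil 2n/3\rceil$. The natural pattern is to repeat the block $(2,2,0)$ along the cycle: assign $f(v_i)=0$ when $i \equiv 2 \pmod 3$ and $f(v_i)=2$ otherwise. When $3 \mid n$ this is exactly a valid R2TDF (each $0$-vertex is flanked by two $2$'s; each $2$-vertex has a $2$-neighbor since no two consecutive vertices are both $0$), of weight $\frac{2}{3}\cdot 2n = \frac{4n}{3} = 2\lceil 2n/3\rceil$. For $n \equiv 1 \pmod 3$ and $n \equiv 2 \pmod 3$ one patches the pattern near the "seam": use $\lfloor n/3\rfloor$ copies of $(2,2,0)$ and fill the remaining $1$ or $2$ vertices with $2$'s (or a single $1$ together with appropriate $2$'s), checking in each residue case that the R2TDF conditions still hold at the seam and that the weight is $2\lceil 2n/3\rceil$. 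This is a short finite case check.

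For the lower bound I would use a discharging / block-counting argument. Given any R2TDF $f=(V_0,V_1,V_2)$, the key structural fact is that $V_0$ is an \emph{independent} set whose vertices each have both neighbors in $V_2$; moreover $V_2$ must itself be a total dominating set relative to the "no two consecutive non-$V_2$ vertices" requirement coming from $f(v)\neq 0 \Rightarrow$ a neighbor in $V_2$. Equivalently, writing $a = |V_0|$, $b=|V_1|$, $c=|V_2|$ with $a+b+c=n$ and weight $= b + 2c$, I want to show $b+2c \geq \lceil 4n/3\rceil$. The cleanest route: show $V_2$ must hit every path of two consecutive vertices \emph{except} possibly where a $V_1$-vertex lies, and that around each vertex of $V_0$ the two neighbors are in $V_2$; a counting argument (e.g. summing, over all vertices, a local contribution, or partitioning the cycle into arcs each forced to contain enough weight) yields $3(b+2c) \geq 2(a+b+c) + (\text{nonnegative correction}) = 2n + \cdots$, hence $b+2c \geq \lceil 2n/3\rceil \cdot 2$. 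I would make this precise by assigning to each vertex $v$ a charge equal to $f(v)$ and redistributing so that every vertex ends with charge $\geq 4/3$: a $0$-vertex receives $2/3$ from each of its two $2$-neighbors, a $1$-vertex receives... — here one must check the bookkeeping balances, using that each $2$-vertex sends out at most $2\cdot(2/3) = 4/3$ and retains $2 - 4/3 = 2/3$, which is \emph{not} enough, so the discharging rule needs to account for the fact that a $2$-vertex adjacent to a $0$-vertex has a $2$-neighbor on its other side. The honest version of the argument partitions the cyclic sequence into maximal runs and bounds each run's weight from below.

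The main obstacle is getting the lower bound constant exactly right, i.e. showing one cannot do better than weight $2\lceil 2n/3\rceil$ and in particular ruling out configurations that mix in $V_1$-vertices to save weight. A $V_1$-vertex costs $1$ but still needs a $V_2$-neighbor, so intuitively a $1$ is never more efficient than reusing a nearby $2$; formalizing this "$V_1$ does not help on cycles" claim — perhaps by an exchange argument showing any $\gamma_{\times 2R}$-function can be modified to one with $V_1 = \emptyset$ without increasing weight, reducing the problem to $2$-tuple domination of $C_n$ via Proposition \ref{k-TupRomGraph} and the known value of $\gamma_{\times 2}(C_n) = \lceil 2n/3 \rceil$ — is where the real work lies. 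I expect the cleanest writeup first establishes $\gamma_{\times 2R}(C_n) = 2\gamma_{\times 2}(C_n)$ (that $C_n$ is a $2$-tuple Roman graph) and then quotes or re-derives $\gamma_{\times 2}(C_n) = \lceil 2n/3\rceil$.
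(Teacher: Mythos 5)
Your upper bound is fine and is essentially the paper's construction (repeat the block $2,2,0$ and patch the seam), but the lower bound — which is where all the content of the proposition lies — is not actually proved in your proposal. You offer three sketches and complete none of them: the discharging rule is, as you yourself note, not balanced (a $2$-vertex cannot afford to send $2/3$ to two $0$-neighbours and keep $4/3$); the "partition into maximal runs" version is announced but never carried out; and the reduction via Proposition \ref{k-TupRomGraph} together with $\gamma_{\times 2}(C_n)=\lceil 2n/3\rceil$ is only stated as an expectation, with the essential exchange claim ("any $\gamma_{\times 2R}$-function on $C_n$ can be modified to one with $V_1=\emptyset$ without increasing the weight") explicitly labelled as "where the real work lies" and left unproved. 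Moreover, your sketch never confronts the one place where a naive count genuinely falls short: the obvious bound, obtained from the fact that every three consecutive vertices carry weight at least $4$ (this uses all three R2TDF conditions, including that a $2$-vertex needs a $2$-neighbour), gives only $\gamma_{\times 2R}(C_n)\geq\lceil 4n/3\rceil$, and when $n\equiv 2\pmod 3$ this is $2\lceil 2n/3\rceil-1$, one less than the claimed value. The paper's proof is exactly: this window-counting bound, plus a separate structural argument for $n\equiv 2\pmod 3$ showing a hypothetical function of weight $\lceil 4n/3\rceil$ cannot exist. Any correct write-up must do something extra in that residue class, and your proposal contains no such step.

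For what it is worth, your third route can be made rigorous and is arguably cleaner than the paper's treatment of the $n\equiv 2\pmod 3$ case: by Proposition \ref{properties}(f), in a $\gamma_{\times 2R}$-function each vertex of $V_1$ has exactly one neighbour in $V_2$; on a cycle its other neighbour cannot lie in $V_0$ (a $V_0$-vertex needs both neighbours in $V_2$), so $V_1$-vertices occur in adjacent pairs, and each such pair with values $(1,1)$ can be replaced by $(0,2)$ without increasing the weight or violating any condition. Iterating gives a minimum function with $V_1=\emptyset$, so $C_n$ is $2$-tuple Roman by Proposition \ref{k-TupRomGraph}, and $\gamma_{\times 2R}(C_n)=2\gamma_{\times 2}(C_n)=2\lceil 2n/3\rceil$ once one proves $\gamma_{\times 2}(C_n)=\lceil 2n/3\rceil$ (which is easy but is also not in this paper, so it must be derived). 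As submitted, however, the proposal is a plan with the decisive lower-bound argument missing.
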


\begin{proof}
Let $V(C_n)=\{1,2,...,n\}$, and let $E(C_n)=\{ij\mbox{ }|\mbox{ }
j\equiv i+1 \pmod{n},~1\leq i\leq n\}$. Since $(V_0,\emptyset,V(C_n)-V_0)$ is a R2DF on $C_n$ where $V_0=\{3t+1\mbox{
}|\mbox{ }0\leq t\leq \lfloor \frac{n}{3}\rfloor-1\}$, we have $\gamma_{\times2R}(C_n)\leq 2\lceil\frac{2n}{3}\rceil$.

\vskip 0.15 true cm

On the other hand, since in any R2DF every three consecutive vertices have
at least weight four, we have $\gamma_{\times 2R}(C_n)\geq
\lceil\frac{4n}{3}\rceil$. Since $\lceil\frac{4n}{3}\rceil=2\lceil\frac{2n}{3}\rceil$ where $n\not\equiv 2 \pmod{3}$, we consider
$n\equiv 2 \pmod{3}$. Then $\lceil\frac{4n}{3}\rceil=2\lceil\frac{2n}{3}\rceil-1$. Now let
$f=(V_0,V_1,V_2)$ be a min-R2DF on $C_n$. Since
every vertex in $V_2$ is adjacent to at least one vertex in $V_2$
and $f$ has minimum weight, we conclude that if
$i-1,i\in V_2$, then $i+1\in V_0$, as possible as. Therefore $f(3t+1)=0$ and
$f(3t)=f(3t+2)=2$ for $0\leq t\leq \lfloor\frac{n}{3}\rfloor-1$.
This implies $f(n-2)=f(n-1)=2$, and so $\gamma_{\times
2R}(C_n)=f(V(C_n))=\lceil\frac{4n}{3}\rceil+1=2\lceil\frac{2n}{3}\rceil$.
\end{proof}

\begin{prop}
\label{gamm x2R(Pn)} For any path $P_n$ of order $n\geq 2$,
\begin{equation*}
\gamma_{\times 2R}(P_n)=\left\{
\begin{array}{ll}
2\lceil\frac{2n}{3}\rceil    & \mbox{if }n\equiv 1,2 \pmod{3}, \\
2\lceil\frac{2n}{3}\rceil +1 & \mbox{if }n=3, \\
2\lceil\frac{2n}{3}\rceil +2 & \mbox{otherwise}.
\end{array}
\right.
\end{equation*}
\end{prop}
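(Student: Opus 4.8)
The plan is to prove the result for $P_n$ by combining a direct construction (upper bound) with a lower bound argument that mirrors the cycle case, then handle the three residue classes separately with careful attention to the path's two endpoints of degree $1$. Recall that every vertex of $P_n$ has degree at least $1 = k-1$ with $k = 2$, so R2TDFs exist, and by part (d) of Proposition \ref{properties} every vertex of degree $1$ — i.e., both endpoints of $P_n$ — must lie in $V_1 \cup V_2$. This endpoint constraint is exactly what forces the extra weight compared with $C_n$ in the cases $n = 3$ and $n \equiv 0 \pmod 3$, $n \neq 3$.

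\textbf{Upper bound.} First I would exhibit an explicit R2TDF of the claimed weight in each case. Label $V(P_n) = \{1, 2, \ldots, n\}$ with edges $\{i, i+1\}$. The natural pattern is the same ``block of two $2$'s, then a $0$'' pattern used for $C_n$: set $f(3t+2) = f(3t+3) = 2$ and $f(3t+1) = 0$ as far as this fits. One checks that a vertex assigned $0$ always sits strictly between two vertices assigned $2$, so it has two neighbours in $V_2$; a vertex assigned $2$ has at least one neighbour in $V_2$. The trouble is at the ends: vertex $1$ gets value $0$ under the naive pattern but has only the single neighbour $2 \in V_2$, which is not enough for a $0$-vertex, so vertex $1$ must be bumped up. Depending on $n \bmod 3$ one adjusts the first and last few values: for $n \equiv 1, 2 \pmod 3$ a weight-$2\lceil 2n/3\rceil$ function can be arranged (e.g. start the pattern so the leftover short block falls at an end and absorb it without extra cost, or give vertex $1$ value $1$ while suppressing a $2$ elsewhere), for $n = 3$ one needs weight $2\lceil 2n/3 \rceil + 1 = 5$ (e.g. $f = (0,2,2)$ fails at vertex $1$, so use $f(1) = 1$, $f(2) = f(3) = 2$), and for $n \equiv 0 \pmod 3$ with $n \neq 3$ both ends cost a unit, giving $2\lceil 2n/3\rceil + 2$. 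I would write these three families of functions out explicitly and verify the R2TDF conditions mechanically.

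\textbf{Lower bound.} For the matching lower bound I would argue as in Proposition \ref{gamm x2R(Cn)}: partition $\{1, \ldots, n\}$ greedily into consecutive triples (plus a short leftover block), observe that any three consecutive vertices carry weight at least $4$ — since if all three were $0$ the middle one would have $f$-value $0$ with no $2$-neighbour among the triple, and a short analysis of the remaining cases shows weight $\geq 4$ — which already gives $\gamma_{\times 2R}(P_n) \geq \lceil 4n/3 \rceil$, equal to the claimed value when $n \equiv 1, 2 \pmod 3$. For $n \equiv 0 \pmod 3$ this only yields $2\lceil 2n/3\rceil = 4n/3$, so I must gain $1$ (for $n = 3$) or $2$ (for $n > 3$) more. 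Here I would use the endpoint obstruction: take a $\gamma_{\times 2R}$-function $f = (V_0, V_1, V_2)$, note $1, n \in V_1 \cup V_2$ by Proposition \ref{properties}(d), and run a local exchange/counting argument. If vertex $1 \in V_1$ it contributes $1$ but by Proposition \ref{properties}(f) still needs its unique neighbour $2$ in $V_2$; if $1 \in V_2$ it contributes $2$. Pairing vertex $1$ with the triple $\{1,2,3\}$ and vertex $n$ with the triple containing it, and propagating the ``two $2$'s then a $0$'' rigidity inward exactly as in the cycle proof, one shows the total is forced strictly above $4n/3$, and a parity/leftover bookkeeping pins it to $+2$ when $n > 3$ and both end-triples are genuine, and to $+1$ when $n = 3$ (the single triple is simultaneously the left and right end-triple, so only one unit is gained).

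\textbf{Main obstacle.} The delicate part is the lower bound for $n \equiv 0 \pmod 3$: the crude ``weight $\geq 4$ per triple'' estimate is tight on the cycle but not on the path, and one must extract the extra $1$ or $2$ from the degree-$1$ endpoints without double counting. I expect to handle this by a minimum-weight-counterexample argument combined with the rigidity lemma (Proposition \ref{properties}(e),(f)) showing that an optimal $f$ essentially must follow the periodic pattern, so that the ``cost of closing off each end'' is an honest additive unit, and the $n = 3$ case is special precisely because the two ends coincide.
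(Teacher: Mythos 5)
The step that cannot be completed is your lower bound in the case $n\equiv 0\pmod 3$, $n>3$, where you assert that ``both ends cost a unit'' and that a parity/bookkeeping argument pins the weight to $2\lceil 2n/3\rceil+2$. In fact one end of a path can be closed off by the block $(2,2,0)$ at no cost beyond the interior rate of four per triple, so only one endpoint forces an extra unit. Concretely, on $P_6$ the function $f=(2,2,0,2,2,1)$ is a valid R2TDF (the only $0$-vertex is $3$, and both of its neighbours $2,4$ have value $2$; every nonzero vertex has a neighbour of value $2$) of weight $9<10=2\lceil 12/3\rceil+2$; more generally, for $n=3s\geq 6$, repeating the block $(2,2,0)$ $s-1$ times and ending with $(2,2,1)$ gives a R2TDF of weight $4s+1=2\lceil 2n/3\rceil+1$. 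Hence no propagation argument can establish the ``$+2$'': the displayed formula itself fails for $n\equiv 0\pmod 3$, $n\geq 6$, and the paper's own proof breaks at exactly the same point (its claim that every $\gamma_{\times 2R}$-function on $P_n$, $n\geq 4$, satisfies $f(1)=f(n)=1$ and $f(2)=f(3)=f(n-2)=f(n-1)=2$ is contradicted by the example above).

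There is also an arithmetic slip in the part you treat as routine: $\lceil 4n/3\rceil=2\lceil 2n/3\rceil$ only when $n\not\equiv 2\pmod 3$; for $n\equiv 2\pmod 3$ it equals $2\lceil 2n/3\rceil-1$ (e.g.\ $n=5$: $\lceil 20/3\rceil=7$ while $\gamma_{\times 2R}(P_5)=8$), so the bare ``four per triple'' count does not reach the claimed value in that residue class either. Moreover, a partition into $\lfloor n/3\rfloor$ disjoint triples plus a leftover block only yields $4\lfloor n/3\rfloor$ plus the leftover weight, which falls short of $\lceil 4n/3\rceil$ when $3\nmid n$; the cycle argument gets $3f(V)\geq 4n$ by summing over all $n$ consecutive triples, and that uses the fact that every vertex is the middle of some triple, which is false on a path. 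So even for $n\equiv 1,2\pmod 3$, where the stated formula is correct, your lower bound needs the endpoint constraints used explicitly ($f(1),f(n)\geq 1$, $f(2)=f(n-1)=2$, and vertex $2$ needs a neighbour of value $2$, forcing $f(1)=2$ or $f(3)=2$) together with a short case analysis on the end blocks; the upper-bound constructions you sketch are fine.
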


\begin{proof}
Let $V(P_n)=\{1,2,...,n\}$, and let $E(P_n)=\{ij\mbox{ }|\mbox{ }
j=i+1,~1\leq i\leq n-1 \}$. Since $(\emptyset,\emptyset,V(P_n))$ is the only min-R2DF on $P_2$ and $(\emptyset,\{1\},\{2,3\})$ is a min-R2DF on $P_3$, we consider $n\geq 4$. Let $f=(V_0,V_1,V_2)$ be a min-R2DF on $P_n$. Then $f(1)=f(n)=1$, and
$f(2)=f(3)=f(n-2)=f(n-1)=2$. This implies $\gamma_{\times
2R}(P_4)=6$, $\gamma_{\times 2R}(P_5)=8$, $\gamma_{\times
2R}(P_6)=10$, as desired. Therefore, we may assume $n\geq 7$. Let $\mathcal{L}=V(P_n)-\{1,2,3,n-2,n-1,n\}$.
Since every three consecutive vertices in $\mathcal{L}$ have at least weight four and every two consecutive vertices in
it have at least weight two, we conclude that
$(V_0,V_1,V_2)$ is a min-R2DF on $P_n$ where $V_0=\{3t+1~|~1\leq t \leq
\lfloor\frac{n-1}{3}\rfloor-1\}$, $V_1=\{1,n\}$, $V_2=V(P_n)-V_0\cup V_1$ and $n\geq 7$, and this completes our proof.
\end{proof}

Since it can be easily verified that for any $n\geq 3$,
\begin{equation*}
\gamma_{\times 2}(C_n)=\left\{
\begin{array}{ll}
\lceil\frac{2n}{3}\rceil       & \mbox{if }n \mbox{ is odd}, \\
\lfloor\frac{2n}{3}\rfloor   & \mbox{if }n \mbox{ is even},
\end{array}
\right.
\end{equation*}
and for any $n\geq 2$,
\begin{equation*}
\gamma_{\times 2}(P_n)=\left\{
\begin{array}{ll}
\lceil\frac{2n}{3}\rceil       & \mbox{if }n \equiv 0,2,5,8 \pmod{9}, \\
\lceil\frac{2n}{3}\rceil +1   & \mbox{otherwise}.
\end{array}
\right.
\end{equation*}
we have the following two propositions by Propositions \ref{gamm
x2R(Cn)} and \ref{gamm x2R(Pn)}.

 \begin{prop}
\label{Cn is 2RG} For any $n\geq 3$, a cycle $C_n$ is a Roman 2-tuple graph if and only if $n \not\equiv 2,4 \pmod{6}$.
\end{prop}
 \begin{prop}
\label{Pn is 2RG} For any $n\geq 2$, a path $P_n$ is a Roman 2-tuple graph if and only if $n\neq 3$ and $n \not\equiv 0,1,4,7 \pmod{9}$.
\end{prop}

We recall that $W_n$ denotes a wheel of order $n\geq 4$ with $V(W_n)=\{v_0,v_1,\cdots,v_{n-1}\}$ such that
$deg(v_0)=n-1$ and $deg(v_i)=3$ for $1\leq i \leq n$. Here, we
calculate $\gamma_{\times kR}(W_n)$ for $1\leq k \leq 4$, because
$\delta(W_n)=3\geq k-1$. Since $\gamma_{R}(W_n)=2$ and
$\gamma_{\times 4R}(W_n)=2n$, we
consider $k=2,3$ in the next two propositions. First we state a proposition from \cite{KV}.

\begin{prop}
\label{gamm kR(Wn)} \emph{\cite{KV}} For any wheel $W_n$ of order $n\geq4$,
\begin{equation*}
\gamma_{kR}(W_n)=\left\{
\begin{array}{ll}
2 & \mbox{if }k=1, \\
\lceil\frac{2(n-1)}{3}\rceil +2& \mbox{if }k=2, \\
n & \mbox{if }k\geq 3.
\end{array}
\right.
\end{equation*}
\end{prop}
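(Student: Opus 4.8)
The plan is to treat the three ranges of $k$ separately, in each case matching an explicit Roman $k$-dominating function against a lower bound. Write $z$ for the hub of $W_n$, so that the other $n-1$ vertices form the rim cycle $C_{n-1}$, and recall that $\gamma_R(C_{n-1})=\lceil 2(n-1)/3\rceil$ by \cite{CDHH04}. For $k=1$ the value is immediate: the function with $f(z)=2$ and $0$ elsewhere is a Roman dominating function of weight $2$, while no Roman dominating function of a graph having an edge can have weight below $2$, so $\gamma_R(W_n)=2$.

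For $k\ge 3$ I would show $\gamma_{kR}(W_n)=n$. The constant function $1$ has no vertex of value $0$, hence is a Roman $k$-dominating function of weight $n$. Conversely, for any Roman $k$-dominating function $f=(V_0,V_1,V_2)$ we have $f(V)=2|V_2|+|V_1|=n+|V_2|-|V_0|$, so it is enough to prove $|V_2|\ge|V_0|$. If $z\in V_0$, then no rim vertex can lie in $V_0$, because such a vertex has degree $3$ with $z$ among its neighbours and $z\notin V_2$, so it cannot have $k\ge 3$ neighbours in $V_2$; hence $V_0=\{z\}$ and $|V_2|\ge k>1$. If $z\notin V_0$, then every vertex of $V_0$ is a degree-$3$ rim vertex all of whose neighbours lie in $V_2$, so $V_0$ is independent on the rim cycle and its $2|V_0|$ incident cycle-edges are distinct and land in $V_2$, whence $2|V_0|\le 2|V_2\cap V(C_{n-1})|\le 2|V_2|$.

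The main case is $k=2$. For the upper bound, take a $\gamma_R$-function $g$ of $C_{n-1}$ and extend it by $f(z)=2$; a rim vertex of value $0$ then has a value-$2$ rim neighbour together with $z$, so has two value-$2$ neighbours, and $f$ is a Roman $2$-dominating function of weight $2+\gamma_R(C_{n-1})=\lceil 2(n-1)/3\rceil+2$. For the lower bound, let $f=(V_0,V_1,V_2)$ be a $\gamma_{2R}$-function and split on $f(z)$. If $f(z)=2$, then for each value-$0$ rim vertex the hub covers only one of its two required value-$2$ neighbours, so the restriction of $f$ to the rim is a Roman dominating function of $C_{n-1}$ and $f(V)\ge 2+\gamma_R(C_{n-1})$. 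If $f(z)\ne 2$, then each value-$0$ rim vertex must have both of its cycle-neighbours in $V_2$; letting $n_i$ be the number of rim vertices of value $i$, we get $\sum_{\mathrm{rim}}f=(n-1)+(n_2-n_0)$, and counting the cycle-edges at the value-$0$ rim vertices (pairwise nonadjacent and each flanked by two distinct value-$2$ vertices) gives $n_0\le n_2$, so $\sum_{\mathrm{rim}}f\ge n-1$, hence $f(V)\ge n-1$, and $f(V)\ge n$ when $f(z)=1$. Since $n-1$ is at least $\lceil 2(n-1)/3\rceil+2$ once $n$ is large enough, this settles all but finitely many small wheels, which are treated separately.

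The step I expect to be the main obstacle is this last lower bound when $f(z)\ne 2$, and above all the sub-case $f(z)=0$: the generic estimate $f(V)\ge n-1$ does not itself reach $\lceil 2(n-1)/3\rceil+2$ for small $n$, so one has to use that $z$ then needs two value-$2$ rim neighbours and that each value-$2$ rim vertex can serve as the mandatory neighbour of at most two value-$0$ rim vertices, and feed this back into the cycle-edge count to bound $\sum_{\mathrm{rim}}f$ more sharply. What remains is then a short finite verification over the smallest wheels, which finishes the proof.
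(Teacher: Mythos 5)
The paper offers no argument of its own for this proposition; it is simply quoted from \cite{KV}, so there is no in-paper proof to compare against. Your cases $k=1$ and $k\ge 3$ are correct (the identity $f(V)=n+|V_2|-|V_0|$ together with the edge count on the rim does give $|V_2|\ge|V_0|$), and for $k=2$ the upper bound, the sub-case $f(z)=2$, the sub-case $f(z)=1$, and the sub-case $f(z)=0$ with $n\ge 7$ are all sound.

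The genuine problem is exactly the step you flag as the ``main obstacle,'' and it is not a gap you can close: with the paper's convention that $W_n$ has order $n$ (hub of degree $n-1$ plus rim cycle $C_{n-1}$), the stated formula fails at $n=5$, so no sharper feeding of the hub condition into the cycle-edge count, and no finite verification, can rescue it. Concretely, in $W_5$ (hub plus $C_4$) assign $2$ to two antipodal rim vertices and $0$ to the other two rim vertices and to the hub. Every vertex of value $0$ has two neighbours of value $2$ (a $0$-rim vertex sees both labelled rim vertices; the hub sees them as well), and Roman $2$-domination imposes no condition on nonzero vertices, so this is a Roman $2$-dominating function of weight $4$, whereas $\lceil 2(n-1)/3\rceil+2=5$. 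This is precisely the tight case of your inequality $n_0\le n_2$: when the rim is an even cycle the $0/2$ alternation attains $n_0=n_2$, giving $f(V)=n-1$, and for $n=5$ this beats the claimed value (for $n\ge 7$ it does not, and $n=4,6$ check out, so $n=5$ is the unique failure). So your proof plan is essentially correct except that the promised ``short finite verification over the smallest wheels'' cannot succeed at $W_5$; the honest conclusion is that the proposition as transcribed (or the wheel convention borrowed from \cite{KV}) is wrong at $n=5$ and must either exclude that case or state a different value there, rather than that the remaining small cases are routine.
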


\begin{prop}
\label{gamm x2R(Wn)} For any wheel $W_n$ of order $n\geq4$,
$\gamma_{\times 2R}(W_n)=\lceil\frac{2(n-1)}{3}\rceil +2$.
\end{prop}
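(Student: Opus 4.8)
The plan is to prove the equality by establishing matching lower and upper bounds, each of which reduces to a known fact about the rim cycle $C_{n-1}$.

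For the lower bound I would not do any new work: since $\delta(W_n)=3\ge k-1$ and $\Delta(W_n)=n-1\ge k$ for $k=2$, inequality \eqref{gama{kR}=<gama{xkR}} gives $\gamma_{\times 2R}(W_n)\ge\gamma_{2R}(W_n)$, and Proposition \ref{gamm kR(Wn)} gives $\gamma_{2R}(W_n)=\lceil\frac{2(n-1)}{3}\rceil+2$. That already settles one direction.

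For the upper bound, the key observation is that once we set $f(0)=2$ on the hub, the condition that $f$ be an R2TDF on $W_n$ translates on the rim into exactly the condition of being a Roman dominating function on $C_{n-1}$: a rim vertex $i$ with $f(i)=0$ has the hub as one value-$2$ neighbor, so it only needs one more value-$2$ vertex among its two rim neighbours; a rim vertex with $f(i)\neq 0$, as well as the hub itself, is satisfied via the hub edge as long as some rim vertex carries value $2$. So I would take a $\gamma_R(C_{n-1})$-function $g=(U_0,U_1,U_2)$ on the rim (using $\gamma_R(C_m)=\lceil\frac{2m}{3}\rceil$ from \cite{CDHH04}), observe that $U_2\neq\emptyset$ because otherwise every rim vertex would have value at least $1$ and the weight would be at least $n-1>\lceil\frac{2(n-1)}{3}\rceil$ for $n\ge 4$, and then define $f$ on $W_n$ by $f(0)=2$ and $f|_{\mathrm{rim}}=g$. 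A short check over the four vertex types — the hub, and rim vertices of value $0$, $1$, $2$ — shows $f$ is an R2TDF of weight $2+\gamma_R(C_{n-1})=\lceil\frac{2(n-1)}{3}\rceil+2$, giving $\gamma_{\times 2R}(W_n)\le\lceil\frac{2(n-1)}{3}\rceil+2$, and combined with the lower bound this proves the proposition.

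There is no genuinely hard step here: the lower bound is a citation and the upper bound is a direct construction whose verification is mechanical. The only place that deserves a moment's care is ensuring that the hub $0$, which has $f(0)=2\neq 0$ and hence needs one value-$2$ neighbour, actually has one; this is precisely why one records $U_2\neq\emptyset$, which is immediate from the cycle Roman-domination weight formula. I would also remark explicitly that the degenerate case $n=4$ (where $W_4=K_4$) is handled by the same argument with $C_3$ on the rim, so no separate treatment of small wheels is needed.
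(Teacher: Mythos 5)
Your proof is correct and follows essentially the same route as the paper: the lower bound is the same citation of Proposition \ref{gamm kR(Wn)} together with inequality (\ref{gama{kR}=<gama{xkR}}), and your upper-bound construction (hub gets value $2$, rim gets an optimal Roman dominating pattern of $C_{n-1}$) is exactly the paper's three explicit functions $f_0,f_1,f_2$, just packaged by invoking $\gamma_R(C_{n-1})=\lceil\frac{2(n-1)}{3}\rceil$ instead of the case analysis modulo $3$. The only point needing care, that the hub has a value-$2$ rim neighbour, you handle correctly via $U_2\neq\emptyset$.
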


\begin{proof}
Let $X=\{v_{3t+1}~|~0\leq t\leq \lfloor\frac{n-1}{3}\rfloor-1\}\cup\{v_0\}$ and let $V_0=V(W_n)-(V_1\cup V_2)$ in a R2DF $(V_0,V_1,V_2)$ on $W_n$. Since 
\begin{equation*}
f=(V_0,V_1,V_2)=\left\{
\begin{array}{ll}
(V_0,\emptyset,X\cup\{v_{n-2}\})  & \mbox{if }n\equiv 0 \pmod{3}, \\
(V_0,\emptyset,X)                   & \mbox{if }n\equiv 1 \pmod{3}, \\
(V_0,\{v_{n-2}\},X)                       & \mbox{if }n\equiv 2 \pmod{3},
\end{array}
\right.
\end{equation*}
is a R2DF on $W_n$ with weight $\lceil\frac{2(n-1)}{3}\rceil +2$, we obtain
$\gamma_{\times 2R}(W_n)=\lceil\frac{2(n-1)}{3}\rceil +2$, by
Propositions \ref{properties}-(a) and \ref{gamm kR(Wn)}.
\end{proof}

\begin{prop}
\label{gamm x3R(Wn)}  For any wheel $W_n$ of order $n\geq4$, $\gamma_{\times 3R}(W_n)=2n-2\lfloor \frac{n-1}{3}\rfloor$.
\end{prop}

\begin{proof}
Let $f=(V_0,V_1,V_2)$ be a minimal R3DF on $W_n$. Since every vertex, except probably $v_0$, has degree three and $v_0$ is adjacent to all other $n-1$ vertices, we conclude $v_0\in V_2$. Also, we know $v_i\in V_2$, for some $1\leq i \leq n-1$, implies $v_{i-1},v_{i+1}\in V_2$. By considering these facts and the minimality of the weight of $f$, we obtain $V_1=\emptyset$, and $|V_0|\leq \lfloor \frac{n-1}{3} \rfloor$. Hence $\gamma_{\times 3R}(W_n)\geq 2|V_2|=2n-2|V_0|\geq 2n-2\lfloor \frac{n-1}{3}\rfloor$. On the other hand, since $(\{v_{3t+1}~|~0\leq t\leq \lfloor\frac{n-1}{3}\rfloor-1\}, \emptyset,V(W_n)-V_0)$ is a R3DF on $W_n$ with weight $2n-2\lfloor \frac{n-1}{3}\rfloor$, we obtain $\gamma_{\times 3R}(W_n)=2n-2\lfloor \frac{n-1}{3}\rfloor$.
\end{proof}

\section{Mycieleskian of a graph}

In this section, we give some shap bounds for the Roman $k$-tuple domination number of the Mycieleskian of a graph in terms of the same number of the graph and $k$. Also we present the Roman $k$-tuple domination number of the Mycieleskian of the complete graphs. First we recall the definition of Mycieleskian of a graph.

\begin{defn}
\emph{\cite{West} Th}e Mycieleskian $M(G)$ \emph{of a graph $G=(V,E)$ is a graph
with vertex set $V\cup U\cup \{w\}$, and edge set $E\cup \{u_jv_i~|~ v_jv_i\in E \mbox{ and } u_j\in U \}\cup \{u_jw~|~
u_j\in U\}$ where $U=\{u_{j}~|~ v_{j}\in V\}$}.
\end{defn}

Figure \ref{fi:Myc(K_5)} shows the Mycileskian of $K_5$. 
\begin{figure}[htp]
\centering
\begin{tikzpicture}
\tikzstyle{vertex}=[draw,circle,minimum size=10pt,inner sep=0pt]

\node[vertex] at (0,3) (r1) {$v_1$};
\node[vertex] at (0,0) (r2) {$v_2$};
\node[vertex] at (-0.8,2.3) (r3) {$v_3$};
\node[vertex] at (-0.8,0.7) (r4) {$v_4$};
\node[vertex] at (-1.2,1.5) (r5) {$v_5$};

\node[vertex] at (1.2,3) (c1) {$u_1$};
\node[vertex] at (1.2,0) (c2) {$u_2$};
\node[vertex] at (2,2.3) (c3) {$u_3$};
\node[vertex] at (2,0.7) (c4) {$u_4$};
\node[vertex] at (2.4,1.5) (c5) {$u_5$};

\node[vertex] at (4,1.5) (c6) {$w$};

{
\color{blue}
\draw (r1) -- (r2); \draw (r1) -- (r3); \draw (r1) -- (r4); \draw (r1) -- (r5);
\draw (r2) -- (r3); \draw (r2) -- (r4); \draw (r2) -- (r5);
\draw (r3) -- (r4); \draw (r3) -- (r5);
\draw (r4) -- (r5);
}
{
\color{red}
\draw (c1) -- (r2); \draw (c1) -- (r3); \draw (c1) -- (r4); \draw (c1) -- (r5);
\draw (c2) -- (r1); \draw (c2) -- (r3); \draw (c2) -- (r4); \draw (c2) -- (r5); 
\draw (c3) -- (r1); \draw (c3) -- (r2); \draw (c3) -- (r4); \draw (c3) -- (r5);
\draw (c4) -- (r1); \draw (c4) -- (r2); \draw (c4) -- (r3); \draw (c4) -- (r5);
\draw (c5) -- (r1); \draw (c5) -- (r2); \draw (c5) -- (r3); \draw (c5) -- (r4); 
\draw (c6) -- (c1); \draw (c6) -- (c2); \draw (c6) -- (c3); \draw (c6) -- (c4); \draw (c6) -- (c5);
}
\end{tikzpicture}
\caption{The Mycileskian of $K_5$}\label{fi:Myc(K_5)}
\end{figure}
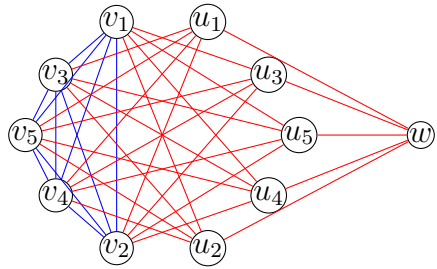


\begin{thm}
\label{RkDF(G) =<RkDF(M(G))=<RkDF(G)+2k} For any graph $G$ with $\delta(G)\geq k-1\geq 1$,
\[
\gamma_{\times kR}(G)+\min\{k-1,2\} \leq \gamma_{\times kR}(M(G))\leq \gamma_{\times kR}(G)+2k.
\]
\end{thm}

\begin{proof}
Let $G$ be a graph with $\delta(G)\geq k-1\geq 1$ and the vertex set $V=\{v_i~|~1\leq i\leq n\}$. 
Since for any min-R$k$DF $f=(V_0,V_1,V_2)$ on $G$, the function $g=(W_0,W_1,W_2)$ is a R$k$DF on $M(G)$ with weight
$\gamma_{\times kR}(G)+2k$ where $W_2=V_2\cup U'\cup \{w\}$ (for some subset $U'$ of $U$ of cardinality $k-1$), $W_1=V_1$ and
$W_0=V_0\cup (U-U')$, we obtain $\gamma_{\times kR}(M(G))\leq
\gamma_{\times kR}(G)+2k$.

\vskip 0.15 true cm

Now let $f=(V_0,V_1,V_2)$ be a min-R$k$DF on $M(G)$
such that $|V_1\cap U|$ and $|V_2\cap U|$ is as possible as minimum.
 Let $L=\{i~|~u_i\in V_1\}$, $L'=\{i~|~v_i\in V_1\}$, $T=\{i~|~u_i\in V_2\}$, and $T'=\{i~|~v_i\in V_2\}$ where $|T|=t\geq k-1$(because of $N_{M(G)}(w)=U$), $|T'|=t'$, $|L|=\ell$, and $|L'|=\ell'$. In the following three cases we show $\gamma_{\times kR}(M(G)) \geq \gamma_{\times kR}(G)+\min\{k-1,2\}$, and our proof will be completed.

\vskip 0.15 true cm

\textsc{Case 1.} $w\in V_0$. Then $t\geq k$ and 
\begin{equation*}
|N_{M(G)}(v_i)\cap V_2\cap V|\left\{
\begin{array}{ll}
=k-1  & \mbox{if }i\in L, \\
\geq k-1     & \mbox{if }i\in T, \\
\geq k                     & \mbox{if }i\not\in L\cup T.
\end{array}
\right.
\end{equation*}
Let 
\[
L_0=\{v_i\in V_0~|~i\in L\}\cup \{v_i\in V_0~|~i\in T, \mbox{ and }|N_{M(G)}(v_i)\cap V_2\cap V|=k-1\}
\]
be a set of cardinality $\ell_0$. Then $\ell\leq \ell_0 \leq \ell+t$. By choosing $V_2'=V_2\cap V$,
$V_1'=(V_1\cap V)\cup L_0$, $V_0'=V-(V_1'\cup V_2')$, since $f'=(V_0',V_1',V_2')$ is a R$k$DF on $G$, we have
\[
\begin{array}{lcl}
\gamma_{\times kR}(G) & \leq & f'(V) \\
                                    & =    & \gamma_{\times kR}(M(G))+\ell_0 -\ell -2t.                                    
\end{array}
\]
Hence
\[
\begin{array}{lcl}
\gamma_{\times kR}(M(G)) & \geq & \gamma_{\times kR}(G)+2t+\ell-\ell_0\\
                                        & \geq& \gamma_{\times kR}(G)+t\\
                                        & \geq& \gamma_{\times kR}(G)+k.
\end{array}
\]

\vskip 0.15 true cm

\textsc{Case 2.} $w\in V_1$. Then $t=k-1$, and $\ell\leq 1$. Because
if $\ell\geq 2$, then by choosing $V_1'=V_1\cap V$, $V_2'=V_2\cup
\{w\}$, $V_0'=V(M(G))-V_1'\cup V_2'$ the function
$f'=(V_0',V_1',V_2')$ is a R$k$DF on $M(G)$, and so
\[
\begin{array}{lcl}
\gamma_{\times kR}(M(G)) & \leq & f'(V) \\
                         & =    & 2(|V_2|+1)+(|V_1|-1)-|U\cup V_1|\\
                         & =    & \gamma_{\times kR}(M(G))+1-\ell,
\end{array}
\]
implying that $\ell\leq 1$. Hence 
\begin{equation*}
|N_{M(G)}(v_i)\cap V_2\cap V|\left\{
\begin{array}{ll}
\geq k-1     & \mbox{if }i\in T\cup L, \\
\geq k                     & \mbox{if }i\not\in L\cup T.
\end{array}
\right.
\end{equation*}
Let
\[
L_1=\{v_i\in V_0~|~i\in T\cup L, \mbox{ and } |N_{M(G)}(v_i)\cap V_2\cap V|=k-1\}
\]
be a set of cardinality $\ell_1$. Hence $\ell_1\leq k$. Then $f'=(V_0',V_1',V_2')$ is a R$k$DF on $G$ where
$V_2'=V_2\cap V$, $V_1'=(V_1\cap V)\cup L_1$, $V_0'=V-(V_1'\cup
V_2')$, and so
\[
\begin{array}{lcl}
\gamma_{\times kR}(G) & \leq & f'(V) \\
                      & =    & 2|V_2|+|V_1|-2k+1+\ell_1-\ell\\
                      & =    & \gamma_{\times kR}(M(G))-2k+1+\ell_1-\ell.
\end{array}
\]
Hence
\[
\begin{array}{lcl}
\gamma_{\times kR}(M(G)) & \geq & \gamma_{\times kR}(G)+2k-\ell_1+\ell-1\\
                         & \geq & \gamma_{\times kR}(G)+k-1.
\end{array}
\]
\vskip 0.15 true cm

\textsc{Case 3.} $w\in V_2$. (Notice that we may assume that there is no
min-R$k$DF $g$ on $M(G)$ with $g(w)\neq 2$.) Then
\begin{equation*}
|N_{M(G)}(v_i)\cap V_2\cap V|\left\{
\begin{array}{ll}
\geq k-2            & \mbox{if }i\in T\cup L, \\
\geq k-1            & \mbox{if }i\not\in L\cup T.
\end{array}
\right.
\end{equation*}

\vskip 0.15 true cm

\textsc{Subcase 3.1} $T\cap T'=\emptyset$. Then the function $f'=(V_0',V_1',V_2')$ is a R$k$DF on $G$ where $V_2'=(V_2\cap V)\cup \{v_i~|~i\in T\}$, $V_1'=(V_1\cap V)-\{ v_i~|~ i \in T,~v_i\in V_1\}$ and $V_0'=V-(V_1'\cup V_2')$, and so
\[
\begin{array}{lcl}
\gamma_{\times kR}(G) & \leq & f'(V) \\
                      & =    & 2|V_2|+|V_1|-f(U)-f(w)+2t-|T\cap L'| \\
                      & =    & \gamma_{\times kR}(M(G))-\ell-2-|T\cap L'|\\
                      & \leq & \gamma_{\times kR}(M(G))-2,
\end{array}
\]
which implies $\gamma_{\times kR}(M(G)) \geq \gamma_{\times kR}(G)+2$.

\vskip 0.15 true cm

\textsc{Subcase 3.2} $T\cap T'\neq \emptyset$. Let $f''$ be a function which is obtained from $f'$ in Subcase 3.1 by adding some needed vertices from $N_G[v_i]$ to $V_2'$ or $V_1'$ if 
\begin{equation*}
|N_G(v_i)\cap V_2|<\left\{
\begin{array}{ll}
k   & \mbox{if }f'(v_i)=0, \\
k-1   & \mbox{if }f'(v_i)\neq 0
\end{array}
\right.
\end{equation*}
(this is possible because $|N_G[v_i]|\geq k$). Let $f''(V(G))-f'(V(G))=p$. Then $f''$ is a R$k$DF on $G$, and so
\[
\begin{array}{lcl}
\gamma_{\times kR}(G) & \leq & f''(V) \\
                      & =  & \gamma_{\times kR}(M(G))-f(U)-f(w)+2|T-T'|-|L'\cap T|+p\\
                      & =  & \gamma_{\times kR}(M(G))-\ell-2-2|T\cap T'|-|T\cap L'|+p\\
                      & \leq & \gamma_{\times kR}(M(G))-2.
\end{array}
\]
The last inequality is obtained from the facts that $p\leq 2t$, $|T\cap T'|+|T\cap L'|\leq |T|=t$, and
 $|T \cap T'|\leq t$. Hence $\gamma_{\times kR}(M(G)) \geq \gamma_{\times kR}(G)+2$.
\end{proof}

By the fact $\gamma_{\times kR}(K_n)=2k$, the next theorem states that the upper bound given in Theorem
\ref{RkDF(G) =<RkDF(M(G))=<RkDF(G)+2k} is sharp.

\begin{thm}
\label{R$k$DF,M(Kn)} For any $n\geq k\geq 2$, $\gamma_{\times
kR}(M(K_n))=4k$.
\end{thm}

\begin{proof}
Let $V(K_n)=\{v_i~|~1\leq i \leq n\}$, and let $V(M(K_n))=V\cup
U\cup \{w\}$. Let $f=(V_0,V_1,V_2)$ be a min-R$k$DF on
$M(K_n)$. We show that $f(V(M(K_n)))\geq 4k$.
Since $N_{M(K_n)}(w)=U$ and $N_{M(K_n)}(u_i)\subseteq V\cup\{w\}$
for each $u_i\in U$, we have $|V_2\cap U|\geq k-1$ and $|V_2\cap
V|\geq k-1$. Let $V_2\cap V=\{v_i~|~i\in I\}$ and $V_2\cap
U=\{u_i~|~i\in J\}$ for some $I,J\subseteq \{1,2,...,n\}$. Then
\[
f(V(M(K_n)))=2(|I|+|J|)+f(w)+f(U-V_2)+f(V-V_2).
\]

\vskip 0.15 true cm

\textsc{Case 1.} $|J|=k-1$. Then $w\in V_1\cup
V_2$. First let $|I|=k-1$. Then $U-V_2\subseteq V_1$, and so
\[
\begin{array}{lcl}
f(V(M(K_n))) & \geq & 4(k-1)+1+2(n-k+1) \\
             & =    & 4k-3+2(n-k+1).
\end{array}
\]
Since $n\leq 2k-3$ implies $u_i,v_i\in V_2$ for
some $i\in J$, and so $|N_{M(K_n)}(u_i)\cap V_2|< k-1$, we have $n\geq 2k-2$. Hence for $k\geq 3$,
\[
\begin{array}{lcl}
f(V(M(K_n))) & \geq & 4k-3+2(n-k+1) \\
             & \geq & 4k-3+2(k-1) \\
             & =    & 6k-5 \\
             & \geq & 4k.
\end{array}
\]
Let $k=2$. If $n=2$, then $M(K_2)=C_5$, and so $\gamma_{\times
2R}(M(K_2))=2\lceil \frac{10}{3}\rceil=8=4k$, by Proposition
\ref{gamm x2R(Cn)}. If $n\geq 3$, then $n-k+1\geq 2$, and so
\[
\begin{array}{lcl}
f(V(M(K_n))) & \geq & 4k-3+2(n-k+1) \\
             & \geq & 4k+1.
\end{array}
\]
Now let $|I|\geq k$. Since $f$ has minimum weight, we have $|I|=k$, and so
\[
f(V(M(K_n)))=2k+2(k-1)+f(w)+f(V_1\cap U).
\]
If $V_1\cap U=\emptyset$, then $U\cap V_0=U-V_2$.
Since every vertex in $U\cap V_0$ must be adjacent to all vertices in
$V_2\cap V$, we have $V_2\cap V\subseteq \{v_i~|~i\in J\}$, which is
not possible. Therefore $V_1\cap U\neq \emptyset$, and so $f(V(M(K_n)))
\geq 4k$.

\vskip 0.15 true cm

\textsc{Case 2.} $|J|\geq k$. Then
\[
f(V(M(K_n)))\geq 2(|I|+|J|)+f(U-V_2)+f(V-V_2)+f(w).
\]
Since $|J|\geq k+1$ or $|I|\geq k$ impily $f(V(M(K_n))) \geq 4k$, we assume
$|J|=k$ and $|I|=k-1$. This implies $I\cap J=\emptyset$, and so
$n\geq 2k-1$. On the other hand, $|I|=k-1$ implies
$U-V_2\subseteq V_1$, and so $f(U-V_2)\geq |U|-k=n-k\geq k-1$.
Therefore, if $k\geq 3$, then
\[
\begin{array}{lcl}
f(V(M(K_n))) & \geq & 2(2k-1)+k-1 \\
             & =    & 5k-3 \\
             & \geq & 4k,
\end{array}
\]
and if $k=2$, then $\{v_i~|~i\in J\}\subseteq V_1$ which implies $f(V(M(K_n)))\geq 5k-3+2\geq 4k$.

\vskip 0.15 true cm

Finally, by choosing a subset $W_2\subseteq V(M(K_n))$ with this
property that $|W_2\cap V|=|W_2\cap U|=k$, and $W_0=V(M(K_n))-W_2$, the
function $(W_0,\emptyset,W_2)$ is a R$k$DF on $M(K_n)$ with weight
$4k$, implying that $\gamma_{\times kR}(M(K_n))=4k$ (Figure \ref{fi:M(K_5)} shows some min-R3DFs for $K_5$ and $M(K_5)$).


\begin{figure}[htp]
\centering
\begin{tikzpicture}
\tikzstyle{vertex}=[draw,circle,minimum size=10pt,inner sep=0pt]

\node[vertex] at (0,3) (r1) {$2$};
\node[vertex] at (0,0) (r2) {$2$};
\node[vertex] at (-1.1,1.5) (r5) {$2$};
\node[vertex] at (1.8,2.3) (r8) {$0$};
\node[vertex] at (1.8,0.7) (r9) {$0$};

{
\color{blue}
\draw (r1) -- (r2); \draw (r1) -- (r5); \draw (r1) -- (r8); \draw (r1) -- (r9);
\draw (r2) -- (r5); \draw (r2) -- (r8); \draw (r2) -- (r9);
\draw (r5) -- (r8); \draw (r5) -- (r9); \draw (r8) -- (r9);
}
\end{tikzpicture}
\qquad\qquad
\begin{tikzpicture}
\tikzstyle{vertex}=[draw,circle,minimum size=10pt,inner sep=0pt]

\node[vertex] at (0,3) (r1) {$2$};
\node[vertex] at (0,0) (r2) {$2$};
\node[vertex] at (-0.8,2.3) (r3) {$0$};
\node[vertex] at (-0.8,0.7) (r4) {$0$};
\node[vertex] at (-1.2,1.5) (r5) {$2$};

\node[vertex] at (1.2,3) (c1) {$2$};
\node[vertex] at (1.2,0) (c2) {$2$};
\node[vertex] at (2,2.3) (c3) {$0$};
\node[vertex] at (2,0.7) (c4) {$0$};
\node[vertex] at (2.4,1.5) (c5) {$2$};

\node[vertex] at (4,1.5) (c6) {$0$};

{
\color{blue}
\draw (r1) -- (r2); \draw (r1) -- (r3); \draw (r1) -- (r4); \draw (r1) -- (r5);
\draw (r2) -- (r3); \draw (r2) -- (r4); \draw (r2) -- (r5);
\draw (r3) -- (r4); \draw (r3) -- (r5);
\draw (r4) -- (r5);
}
{
\color{red}
\draw (c1) -- (r2); \draw (c1) -- (r3); \draw (c1) -- (r4); \draw (c1) -- (r5);
\draw (c2) -- (r1); \draw (c2) -- (r3); \draw (c2) -- (r4); \draw (c2) -- (r5); 
\draw (c3) -- (r1); \draw (c3) -- (r2); \draw (c3) -- (r4); \draw (c3) -- (r5);
\draw (c4) -- (r1); \draw (c4) -- (r2); \draw (c4) -- (r3); \draw (c4) -- (r5);
\draw (c5) -- (r1); \draw (c5) -- (r2); \draw (c5) -- (r3); \draw (c5) -- (r4); 
\draw (c6) -- (c1); \draw (c6) -- (c2); \draw (c6) -- (c3); \draw (c6) -- (c4); \draw (c6) -- (c5);
}
\end{tikzpicture}
\caption{$\gamma_{\times 3R}(K_{5})=6$ (left), and $\gamma_{\times 3R}(M(K_5))=12$ (right)}\label{fi:M(K_5)}
\end{figure}
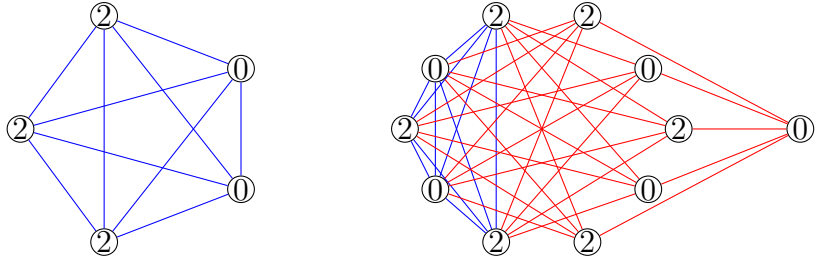
\end{proof}


\section {The corona graphs}

Here, we study the Roman $k$-tuple domination number of corona graphs. We recall that for any graphs $G$ and $H$ of orders $n$ and $m$, respectively, the \emph{corona graph} $cor(G,H)$ is a graph obtained from $G$ and
$H$ by taking one copy of $G$ and $n$ copies of $H$ and joining with
an edge each vertex from the $i$-th copy of $H$ with the $i$-th
vertex of $G$. Hereafter, in $cor(G,H)$ we will denote the set of vertices of $G$ by $V=\{v_1,v_2,\cdots,v_n\}$ and the $i$-th copy of $H$ by $H_i=(W_i,E_i)$. 

\vskip 0.15 true cm

First we give some bounds for the $k$-tuple domination number of a corona graph in the next theorem.

\begin{thm}
\label{LUB k-tupl DS of Corona} For any
graphs $G$ and $H$ with $\delta(H)\geq k-2\geq 0$,
\[
k|V(G)| \leq \gamma_{\times k}(cor(G,H))\leq (|V(H)|+1)|V(G)|,
\]
and these bounds are sharp, and $\gamma_{\times k}(cor(G,H))=k|V(G)|$ if and only if  $H=K_{k-1}$ or $H=F \circ_{k-1} K_{k-1}$ for some graph $F$.
\end{thm}

\begin{proof}
Since for any $k$DS $S$ of $cor(G,H)$ and any vertex $w$ in $H_i$, $|N_{cor(G,H)}[w]\cap
S|\geq k$, and on the other hand since $V(cor(G,H))$ is a $k$DS of $cor(G,H)$, we have
\[
k|V(G)| \leq \gamma_{\times k}(cor(G,H))\leq (|V(H)|+1)|V(G)|.
\]
Obviously, $\gamma_{\times k}(cor(G,H))=k|V(G)|$ if and only if
$H=K_{k-1}$ or $H=F \circ_{k-1} K_{k-1}$ for some graph $F$. For the
upper bound, if $H$ is a ($k-2$)-regular graph, then
$\gamma_{\times k}(cor(G,H))=(|V(H)|+1)|V(G)|$.
\end{proof}

\begin{thm}
\label{LUB kRDF of Corona} For any graphs
$G$ and $H$ with $\delta(H)\geq k-1\geq 1$,
\[
2k|V(G)|\leq \gamma_{\times kR}(cor(G,H))\leq 2\gamma_{\times
k}(cor(G,H)).
\]
\end{thm}

\begin{proof} Since Theorem \ref{xk+k, xkR, 2xk} implies $\gamma_{\times kR}(cor(G,H))\leq 2\gamma_{\times
k}(cor(G,H))$, it is sufficient to prove the lower bound. Let $f=(V_0,V_1,V_2)$ be a R$k$DF on
$cor(G,H)$ and let $v_i$ be a vertex of $G$. In all of the
following cases we prove $f(V)\geq 2k|V(G)|$, and so $\gamma_{\times kR}(cor(G,H))\geq 2k|V(G)|$ (we recall that for any subset $T\subseteq V$, $f(T)=\sum_{v\in T}f(v)$).

\vskip 0.15 true cm

\textrm{Case 1. } $f(v_i)=0$. If there exists a vertex $v\in W_i\cap
V_0$, then $|N_{W_i}(v)\cap V_2|\geq k$, and so $f(W_i\cup
\{v_i\})\geq 2k$. If there exists a vertex $v\in W_i\cap V_1$, then
$|N_{W_i}(v)\cap V_2|\geq k-1$. Now $k\geq 2$ implies that there exists
a vertex $v'\in N_{W_i}(v)\cap V_2$, and so $|N_{W_i}(v')\cap
V_2|\geq k-1$. Therefore, $|(N_{W_i}(v)\cup N_{W_i}(v'))\cap
V_2|\geq k$ which implies $f(W_i\cup \{v_i\})\geq 2k+1$. Finally,
if for any $v'\in W_i$ we have $f(v')=2$, then $f(W_i\cup
\{v_i\})\geq 2k$.

\vskip 0.15 true cm

\textrm{Case 2. } $f(v_i)=1$. If there exists a vertex $v\in W_i\cap
V_0$, then $|N_{W_i}(v)\cap V_2|\geq k$, and so $f(W_i\cup
\{v_i\})\geq 2k+1$. If there exists a vertex $v\in W_i\cap V_1$, then
$|N_{W_i}(v)\cap V_2|\geq k-1$. Now $k\geq 2$ implies that there exists
a vertex $v'\in N_{W_i}(v)\cap V_2$, and so $|N_{W_i}(v')\cap
V_2|\geq k-1$. Therefore $|(N_{W_i}(v)\cup N_{W_i}(v'))\cap
V_2|\geq k$ which implies $f(W_i\cup \{v_i\})\geq 2k+2$. Finally,
if for any $v'\in W_i$ we have $f(v')=2$, then $f(W_i\cup
\{v_i\})\geq 2k+1$.

\vskip 0.15 true cm

\textrm{Case 3. } $f(v_i)=2$. If there exists a vertex $v\in W_i\cap
V_0$, then $|N_{W_i}(v)\cap V_2|\geq k-1$, and so $f(W_i\cup
\{v_i\})\geq 2k$. If there exists a vertex $v\in W_i\cap V_1$, then
$|N_{W_i}(v)\cap V_2|\geq k-2$, and so $f(W_i\cup \{v_i\})\geq 2k-1$.
Since $f(W_i\cup \{v_i\})=2k-1$ if and only if $H=K_{k-1}$, we
obtain $f(W_i\cup \{v_i\})\geq 2k$. Finally, if for any $v'\in W_i$ we have $f(v')=2$, then $f(W_i\cup \{v_i\})\geq
2k$.
\end{proof}

The following theorem is obtained by Theorems \ref{LUB k-tupl DS of
Corona} and \ref{LUB kRDF of Corona}.

\begin{thm}
\label{exact LB kRDF of Corona} For any graphs $G$ and $H$ with $\delta(H)\geq k-2\geq 0$, $\gamma_{\times
kR}(cor(G,H))=2k|V(G)|$ if and only if $H=K_{k-1}$ or
$H=F\circ_{k-1}K_{k-1}$ for some graph $F$.
\end{thm}


\section{Some questions and problems}

Finally, we end our paper with some useful questions and problems.
\begin{ques}
Is $M(G)$ a Roman $k$-tuple graph if $G$ is a Roman $k$-tuple graph?
\end{ques} 
\begin{ques}
For any Roman $k$-tuple graph $G$, is there a Roman $k$-tuple graph $H$ such that $G=M(H)$?
\end{ques} 
\begin{ques}
Find graphs $G$ whose Roman $k$-tuple domination number achieves the bounds in Theorem \ref{RkDF(G) =<RkDF(M(G))=<RkDF(G)+2k}?
\end{ques}
 
\begin{ques}
For any graph $G$, whether $\gamma_{\times 2R}(G)\geq 2\gamma_R(G)$?
\end{ques}
\begin{prob}
Find $\gamma_{\times kR}(M(C_n))$ for $2\leq k\leq 4$ and $\gamma_{\times kR}(M(P_n))$ for $2\leq k\leq 3$.
\end{prob}

\begin{prob}
Find the Roman $k$-tuple domatic number of a graph.
\end{prob}

\begin{prob}
Characterize graphs $G$ with $\gamma_{\times
2R}(G)=\gamma_R(G)$.
\end{prob}
\begin{prob}
Characterize graphs $G$ with $\gamma_{\times kR}(G)=\gamma_{kR}(G)$.
\end{prob}

In \cite{LC}, the authors have defined the \emph{total Roman dominating function} on a graph $G$ as a Roman domination function $f=(V_0,V_1,V_2)$ on it with this additional property that the induced subgraph $G[V_1\cup V_2]$ has no isolated vertex, and in a similar way, they have defined the \emph{total Roman domination number} $\gamma_{tR}(G)$ of $G$. Since $\gamma_{tR}(G)\leq \gamma_{\times kR}(G) \leq \gamma_{\times (k+1)R}(G)$ for any $k\geq 2$, we have
\begin{equation}%
\gamma_{tR}(G)\leq \gamma_{\times 2R}(G).
\label{gama_{tR}(G) =< gama_{X2R}(G)}
\end{equation}%
So,
\begin{prob}
Finding graphs $G$ satisfying $\gamma_{\times 2R}(G)=\gamma_{tR}(G)$ is a natural problem.
\end{prob}



\begin{thebibliography}{20}

\bibitem{CDHH04}  E. J. Cockayne, P. A. Dreyer Jr., S. M. Hedetniemi, S. T.
Hedetniemi, \emph{Roman domination in graphs}, Discrete Mathematics
\textbf{278} (2004) 11-22.

\bibitem{HH} F. Harary, T.W. Haynes, \emph{The $k$-tuple domatic number of a graph}, Math. Slovaka, \textbf{48} (1998), 161--166.

\bibitem{HHS1}  T. W. Haynes, S. T. Hedetniemi, P. J. Slater, \emph{Fundamentals
of Domination in Graphs}, Monographs and Textbooks in Pure and
Applied Mathematics, 208. Marcel Dekker, New York, 1998.

\bibitem{HHS2}  T. W. Haynes, S. T. Hedetniemi, P. J. Slater, \emph{Domination in Graphs: Advanced Topics},
 Monographs and Textbooks in Pure and Applied Mathematics, 209. Marcel Dekker, New York, 1998.

\bibitem{HK} M. A. Henning, A. P. Kazemi, \emph{$k$-tuple total domination
in graphs}, Discrete Applied Mathematics {\bf 158} (2010)
1006--1011.

\bibitem{KV} K. K\"{a}mmerling, L. Volkmann, \emph{Roman $k$-domination in graphs},
J. Korean Math. Soc. \textbf{46} (6) (2009) 1309-1318.

\bibitem{LC} C. H. Liu, G. J. Chang, \emph{Roman domination on strongly chordal graphs}, J. Comb. Optim., \textbf{26} (2013) 608--619.


\bibitem{St99}  I. Stewart, \emph{Defend the Roman Empire!}, Sci. Amer. \textbf{281} (6)
(1999) 136-139.

\bibitem{West}  D. B. West, \emph{Introduction to graph theory}, 2nd edition,
Prentice Hall, USA, 2001.
\end{thebibliography}
\end{document}